\newtheorem{lemma}{Lemma}[section]
\newtheorem{theorem}{Theorem}[section]
\theoremstyle{definition}
\newtheorem{remark}{Remark}
\newcommand{\comment}[1]{}
\numberwithin{equation}{section}
\begin{document}
\title{Existence of Solutions to Mean Field Equations on Graphs}
\author{An Huang,Yong Lin,Shing-Tung Yau}
\date{}
\maketitle

\renewcommand{\thefootnote}{\fnsymbol{footnote}}

\begin{center}
\textbf{Abstract}
\end{center}
In this paper,  we prove two existence results of solutions to mean field equations
$$\Delta u+e^u=\rho\delta_0$$
and
$$\Delta u=\lambda e^u(e^u-1)+4 \pi \sum_{j=1}^{M}{\delta_{p_j}}$$

on an arbitrary connected finite graph, where $\rho>0$ and $\lambda>0$ are constants, $M$ is a positive integer, and $p_1,...,p_M$ are arbitrarily chosen vertices on the graph.

\section{Introduction}

The mean field equation
\begin{equation}\label{MFE}
	\Delta u+e^u=\rho\delta_0
\end{equation}\label{equ:1}has its origin in the prescribed curvature problem in geometry. Closely related is the Kardan-Warner equation \cite{KW74}
 \begin{equation}\label{KW}
 	\Delta u+h e^u=c.
 \end{equation}\label{equ:2}

The name of the equation \eqref{MFE} comes from statistical physics as the mean field limits of the Euler flow \cite{CLMP92}. It has also been shown to be related to the Chern-Simons-Higgs model. The existence of solutions to equation \eqref{MFE} has been studied in \cite{CL03}, \cite{CLW04}, \cite{LW10}, \cite{LY13} on Euclidean spaces and on the two dimensional flat tori. For example, on the two dimensional flat tori, when $\rho \neq 8m \pi $ for any $m\in \mathbf{Z}$, equation \eqref{MFE} always has solutions, see \cite{CL03}, \cite{CLW04}. When $\rho =8 \pi $, it was shown in \cite{LW10} that equation \eqref{MFE} has solutions if and only if the Green's function on the two dimensional flat tori has critical points other than the three half period points.

In \cite{GLY16}, Grigorigan, Lin and Yang have obtained a few sufficient conditions when equation \eqref{KW} has a solution on a finite graph. There are several further results regarding the solutions of \eqref{KW} on graphs in \cite{G17}, \cite{GJ17}, \cite{KS18}.

In this paper, we study equation \eqref{MFE} and also the following mean field equation on graphs:

\begin{equation}\label{equ:4}
	\Delta u=\lambda e^u(e^u-1)+4 \pi \sum_{j=1}^{M}{\delta_{p_j}},
\end{equation} where $\lambda >0$, $M$ is any fixed positive integer, and $p_1,...,p_M$ are arbitrarily chosen vertices on the graph.

Caffarelli and Yang in \cite{CY95} proved an existence result of solutions to equation \eqref{equ:4} on doubly periodic regions in $\mathbf{R}^2$ (the 2-tori), depending on the value of the parameter $\lambda$.

In this paper, we show that equation \eqref{MFE} always has a solution on any connected finite graph (Theorem \ref{2.1}), in contrast to the continuous case.
We shall also prove an existence result for equation \eqref{equ:4} on a connected finite graph (Theorem \ref{2.2}), depending on the value of the parameter $\lambda$, which is in line with the result of Caffarelli and Yang on the 2-tori.

We obtain these results by a mostly straightforward adaption of existing treatments from the continuous case \cite{KW74}, \cite{CY95}, \cite{GLY16}. Once we have the setup, some analysis tend to simplify on finite graphs since there is only a finite number of degrees of freedom. Theorem \ref{2.1} on the other hand shows that the existence of solutions for \eqref{MFE} on the discrete two dimensional tori graph given as the quotient of the two dimensional infinite lattice graph by a rank 2 sublattice, differs from that on the continuous limit-- the two dimensional flat tori, when the parameter $\rho$ takes on certain special values such as $8\pi$.

\begin{remark}
As a side remark, it appears interesting to study the Green's function on the 2-tori by studying the corresponding discrete Green's function on the 2-tori graph stated above. For example, when the torus parameter $\tau=\frac{1}{2}+i$, there exist two additional critical points of the Green's function besides the half periods by \cite{LW10}. A computer study aided by this discrete Green's function indicates that the slope of the line through these two additional critical points of the Green's function is equal to $\frac{25}{64}$.
\end{remark}

\section{Settings and main results}

Let $G=(V,E)$ be a connected finite graph, where $V$ is the set of vertices and $E$ is the set of edges. Denote $N=|V|$. We allow positive symmetric weights $w_{xy}=\omega_{yx}$ on edges $xy \in E$. Let $\mu :V \rightarrow \mathbb{R^{+}} $ be a finite measure. For any function $u:V \rightarrow \mathbb{R}$, the Laplace operator acting on $u$ is defined by $$\Delta u(x)=\frac{1}{\mu (x)}\sum_{y \sim x}{w_{xy}(u(y)-u(x))}, $$where $ y \sim x$ means $xy \in E $. The gradient form of $u$ is by definition $$\Gamma (u) =\frac{1}{2}\int_{V}{|\bigtriangledown u|^2} :=\sum_{x\in V}\frac{1}{2 \mu (x)}\sum_{y \sim x}{w_{xy}(u(y)-u(x))^2} .$$We use the notation $ \int_{V}{f(x)d \mu (x)=\sum_{x \epsilon V}{f(x)\mu (x)}}$. As in \cite{GLY16}, we define a Sobolev Space and a norm by $$W^{1,2}(V)=\{u:V \rightarrow \mathbb{R}:\int_{V}(\vert \bigtriangledown u \vert^2+u^2)d \mu <+\infty\}$$and $$\vert \vert u \vert \vert _{W^{1,2}(V)}=(\int_{V}(\vert \bigtriangledown u \vert^2+u^2)d \mu )^{1/2} $$respectively. Since V is a finite graph, $ W^{1,2}(V)$ is $V^{\mathbb{R}} $, the finite dimensional vector space of all real functions on $V$. We have the following Sobolev embedding (Lemma 5 in \cite{GLY16}):

\begin{lemma}\label{lemma2.1}
	Let $G=(V,E)$ be a finite graph. The Sobolev Space $ W^{1,2}(V)$ is precompact. Namely, if $\{u_j\}$ is bounded in $W^{1,2}(V)$, then there exits some $u \in W^{1,2}(V) $ such that up to a subsequence, $ u_j \rightarrow u$ in $ W^{1,2}(V)$.
\end{lemma}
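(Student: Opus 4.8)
The statement is a purely finite-dimensional fact, and the plan is to reduce it to the Bolzano--Weierstrass theorem. As already noted in the excerpt, because $V$ has only $N=|V|$ vertices, the space $W^{1,2}(V)$ is nothing but $\R^N$, the space of all real-valued functions on $V$, equipped with the norm $\|\cdot\|_{W^{1,2}(V)}$. So the entire content of the lemma is that a bounded sequence in this fixed finite-dimensional normed space admits a subsequence converging in norm, which is precisely the Heine--Borel property of $\R^N$.

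First I would check that boundedness in the $W^{1,2}(V)$ norm controls the value of a function at each individual vertex. Since the gradient contribution to $\|u\|_{W^{1,2}(V)}^2$ is nonnegative, we have the lower bound $\|u\|_{W^{1,2}(V)}^2 \ge \int_V u^2\, d\mu = \sum_{x\in V} u(x)^2\mu(x)$, where every weight satisfies $\mu(x)>0$. Hence a uniform bound $\|u_j\|_{W^{1,2}(V)} \le C$ forces $u_j(x)^2\mu(x) \le C^2$, i.e. $|u_j(x)| \le C/\sqrt{\mu(x)}$ for each fixed $x$. Thus the sequence of vectors $\bigl(u_j(x)\bigr)_{x\in V}\in\R^N$ is bounded, and by Bolzano--Weierstrass I can extract a subsequence $\{u_{j_k}\}$ converging coordinatewise, say $u_{j_k}(x)\to u(x)$ for every $x\in V$. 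The limit $u$ lies in $W^{1,2}(V)=\R^N$ automatically.

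It then remains to upgrade this coordinatewise convergence to convergence in the $W^{1,2}(V)$ norm, which is immediate over a finite vertex set: the quantity $\|u_{j_k}-u\|_{W^{1,2}(V)}^2$ is a finite sum of terms of the form $(u_{j_k}(x)-u(x))^2\mu(x)$ together with gradient terms built from $w_{xy}\bigl((u_{j_k}(y)-u(y))-(u_{j_k}(x)-u(x))\bigr)^2$, each a polynomial, hence continuous, function of the finitely many differences $u_{j_k}(x)-u(x)$, all of which tend to $0$. Therefore $\|u_{j_k}-u\|_{W^{1,2}(V)}\to 0$. There is no genuine analytic obstacle here; the precompactness is entirely a consequence of finite-dimensionality, where all norms are equivalent and closed bounded sets are compact. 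The only point that needs a word of care is that the norm actually dominates each coordinate, which is exactly what the strict positivity of the measure $\mu$ supplies through the $\int_V u^2\,d\mu$ term.
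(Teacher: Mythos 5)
Your proof is correct. Note that the paper itself does not prove this lemma at all: it is quoted verbatim as Lemma~5 of \cite{GLY16}, with the citation serving in place of a proof. Your argument --- using the strict positivity of $\mu$ to turn a $W^{1,2}(V)$ bound into a pointwise bound at each of the finitely many vertices, extracting a subsequence by Bolzano--Weierstrass, and then observing that pointwise convergence on a finite vertex set implies convergence in the $W^{1,2}(V)$ norm because that norm is a continuous function of the finitely many coordinates --- is exactly the standard finite-dimensional argument behind the cited result, so there is nothing to correct and no genuinely different route to compare.
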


\begin{remark}
For finite graphs, Lemma \ref{lemma2.1} can be avoided for the purpose of the present paper. But we include it for potential generalizations to infinite graphs.
\end{remark}

By using the variational principle (see the similar approach in \cite{KW74} and \cite{GLY16}) , we prove the following
\begin{theorem}\label{2.1}
	Equation \eqref{MFE} has a solution on $G$.
\end{theorem}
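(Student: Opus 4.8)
The plan is to solve \eqref{MFE} by a constrained variational argument, following the method of \cite{KW74} and \cite{GLY16}. Write $0\in V$ for the distinguished vertex carrying the Dirac mass, and recall that $\int_V \delta_0\,d\mu=1$ and $\int_V u\,\delta_0\,d\mu=u(0)$. Integrating \eqref{MFE} over $V$ and using $\int_V\Delta u\,d\mu=0$ (which holds on any finite graph by the symmetry $w_{xy}=w_{yx}$) shows that any solution must satisfy the necessary constraint $\int_V e^u\,d\mu=\rho$. This suggests minimizing the functional
\[
F(u)=\frac12\int_V |\nabla u|^2\,d\mu+\rho\,u(0)
\]
over the constraint set $\mathcal{M}=\{u\in W^{1,2}(V):\int_V e^u\,d\mu=\rho\}$. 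The set $\mathcal{M}$ is nonempty (it contains the constant $\log(\rho/\vol(V))$, where $\vol(V)=\int_V d\mu$) and, since $e^u>0$, it is a smooth hypersurface to which the Lagrange multiplier rule applies. A short computation via Green's formula shows that a critical point of $F$ on $\mathcal{M}$ satisfies $\Delta u+\lambda e^u=\rho\delta_0$ for some multiplier $\lambda$; integrating this identity over $V$ together with the constraint forces $\lambda=1$, so any constrained critical point solves \eqref{MFE} exactly.

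It remains to produce a minimizer. First I would decompose $u=\bar u+u^{\perp}$, where $\bar u=\vol(V)^{-1}\int_V u\,d\mu$ is the mean and $u^{\perp}$ has zero mean, so that $\int_V|\nabla u|^2\,d\mu=\int_V|\nabla u^{\perp}|^2\,d\mu$. On $\mathcal{M}$ the constraint reads $e^{\bar u}\int_V e^{u^{\perp}}\,d\mu=\rho$, which determines $\bar u=\log\rho-\log\int_V e^{u^{\perp}}\,d\mu$; substituting gives
\[
F(u)=\frac12\int_V|\nabla u^{\perp}|^2\,d\mu+\rho\,u^{\perp}(0)+\rho\log\rho-\rho\log\int_V e^{u^{\perp}}\,d\mu .
\]
The crucial estimate is that on a finite connected graph the Dirichlet form is positive definite on mean-zero functions, so by equivalence of norms there is a constant $C$ with $\|u^{\perp}\|_{\infty}\le C(\int_V|\nabla u^{\perp}|^2\,d\mu)^{1/2}$; this is the elementary substitute for the Moser--Trudinger inequality, and it is where the finiteness of the graph genuinely simplifies the continuous theory. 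It yields $\log\int_V e^{u^{\perp}}\,d\mu\le \log\vol(V)+C(\int_V|\nabla u^{\perp}|^2\,d\mu)^{1/2}$ and $\rho\,u^{\perp}(0)\ge -\rho C(\int_V|\nabla u^{\perp}|^2\,d\mu)^{1/2}$, whence $F$ is bounded below by a quadratic in $(\int_V|\nabla u^{\perp}|^2\,d\mu)^{1/2}$ with positive leading coefficient. Thus $F$ is coercive in $u^{\perp}$.

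Finally, take a minimizing sequence $\{u_j\}\subset\mathcal{M}$. Coercivity bounds $\int_V|\nabla u_j^{\perp}|^2\,d\mu$; since $u_j^{\perp}$ has mean zero one also gets $\int_V e^{u_j^{\perp}}\,d\mu$ bounded above and bounded below away from $0$, so the means $\bar u_j$ stay bounded and $\{u_j\}$ is bounded in $W^{1,2}(V)$. By Lemma \ref{lemma2.1} a subsequence converges in $W^{1,2}(V)$ to some $u_*$; by continuity $u_*\in\mathcal{M}$ and $F(u_*)=\inf_{\mathcal{M}}F$, so $u_*$ is the desired minimizer and solves \eqref{MFE}. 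On a finite graph one may alternatively invoke the Weierstrass theorem on $\R^N$ in place of Lemma \ref{lemma2.1}; equivalently, the substitution $u=\rho G+v$ with $\Delta G=\delta_0-\vol(V)^{-1}$ reduces \eqref{MFE} to a Kazdan--Warner equation $\Delta v+e^{\rho G}e^{v}=\rho/\vol(V)$ with positive prescribed function and positive constant, the case treated in \cite{GLY16}. I expect the main obstacle to be the coercivity estimate together with the verification that the Lagrange multiplier equals $1$; the remaining compactness step is routine in the finite-dimensional setting.
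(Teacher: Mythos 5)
Your proposal is correct, and it shares the paper's overall skeleton---minimizing the same functional $J(u)=\frac12\int_V|\nabla u|^2\,d\mu+\rho\,u(x_0)$ over the same constraint set $\{\int_V e^u\,d\mu=\rho\}$, then identifying the Lagrange multiplier as $1$ by integrating the Euler--Lagrange equation---but the technical heart, the coercivity/compactness step, is genuinely different. The paper exploits the constraint pointwise: any $u$ in the constraint set satisfies $\min_V u\le\log(\rho/N)$, and telescoping along a shortest path from $x_0$ to a minimizing vertex gives $\frac12\int_V|\nabla u|^2\,d\mu\gtrsim (u(x_0)-\log(\rho/N))^2$, from which $J$ is bounded below and minimizing sequences are bounded in $L^\infty$ (again via path estimates). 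You instead split $u=\bar u+u^{\perp}$ into mean and mean-zero parts, use the constraint to eliminate $\bar u$, and invoke the finite-dimensional Poincar\'e inequality $\|u^{\perp}\|_\infty\le C\bigl(\int_V|\nabla u^{\perp}|^2\,d\mu\bigr)^{1/2}$ on mean-zero functions of a connected finite graph; this makes $J$ a manifestly coercive function of $\|\nabla u^{\perp}\|_2$ and packages the boundedness of minimizing sequences in one stroke. Your route is structurally closer to the continuous Kazdan--Warner argument (the norm equivalence playing the role of Moser--Trudinger, which is exactly where finiteness simplifies matters), and your nonemptiness check via the constant $\log(\rho/\vol(V))$ is cleaner than the paper's interpolation argument (which, as printed, even has a typo: $u_l$ and $\widetilde{u_l}$ are defined identically). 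The paper's path estimates, on the other hand, are more elementary and yield explicit constants in terms of the weights, degrees and diameter, a form that could survive on some infinite graphs where a global Poincar\'e inequality is unavailable. Your closing observation is also a valid third proof, and arguably the shortest one: writing $u=\rho G+v$ with $\Delta G=\delta_0-\vol(V)^{-1}$ (solvable since the right-hand side has zero integral) turns \eqref{MFE} into the Kazdan--Warner equation $\Delta v+e^{\rho G}e^v=\rho/\vol(V)$ with strictly positive prescribed function and positive constant, which is solvable by the results of \cite{GLY16}.
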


Using an iteration method, we next prove the following

\begin{theorem}\label{2.2}
	There is a critical value $\lambda_c$ depending on $G$ satisfying
$$\lambda_c\ge \frac{16\pi M}{|V|},$$ such that when
$\lambda> \lambda_c$,
the equation \eqref{equ:4} has a solution on $G$, and when $\lambda< \lambda_c$, the equation \eqref{equ:4} has no solution.
\end{theorem}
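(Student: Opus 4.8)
The plan is to turn \eqref{equ:4} into a source-free equation, read off the necessary condition that produces the stated lower bound, and then build solutions by a monotone sub-/super-solution iteration whose solvability set turns out to be a ray.

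First I would absorb the Dirac sources. Since $G$ is connected, $\Delta$ has kernel the constants and image exactly the $\mu$-mean-zero functions; as $\int_V\left(4\pi\sum_{j=1}^M\delta_{p_j}-\frac{4\pi M}{|V|}\right)d\mu=0$, there is $u_0$ with $\Delta u_0=4\pi\sum_{j=1}^M\delta_{p_j}-\frac{4\pi M}{|V|}$. Put $u=u_0+w$, $h=e^{u_0}>0$, $b=\frac{4\pi M}{|V|}$; then \eqref{equ:4} is equivalent to the regular equation
$$\Delta w=\lambda h^2e^{2w}-\lambda h e^{w}+b.$$
Integrating over $V$ and using $\int_V\Delta w\,d\mu=0$ gives $A-B=\frac{4\pi M}{\lambda}$ with $A=\int_V h e^{w}\,d\mu$ and $B=\int_V h^2e^{2w}\,d\mu$. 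Cauchy--Schwarz yields $A^2\le\left(\int_V 1\,d\mu\right)B=|V|\,B$ (using the normalization $\int_V 1\,d\mu=|V|$), so $\frac{4\pi M}{\lambda}=A-B\le A-\frac{A^2}{|V|}\le\frac{|V|}{4}$, i.e. any solvable $\lambda$ obeys $\lambda\ge\frac{16\pi M}{|V|}$. This already gives nonexistence for $\lambda<\frac{16\pi M}{|V|}$ and forces $\lambda_c\ge\frac{16\pi M}{|V|}$.

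Next I would record a maximum principle: at a vertex $x_0$ where $u$ is maximal one has $\Delta u(x_0)\le0$ while $4\pi\sum_j\delta_{p_j}(x_0)\ge0$, so \eqref{equ:4} forces $\lambda e^{u(x_0)}(e^{u(x_0)}-1)\le0$ and hence $u\le u(x_0)\le0$. Thus every solution satisfies $e^u(e^u-1)\le0$ \emph{pointwise}, which is the sign that drives the monotonicity below. For the iteration, write $L=-\Delta$ and note that for $K>0$ the operator $L+K$ is a strictly diagonally dominant M-matrix on the connected graph, so $(L+K)^{-1}$ is order-preserving. Choosing $K$ large enough (on the relevant bounded range) that $u\mapsto Ku-N_\lambda(u)$ is nondecreasing, where $N_\lambda(u)=\lambda e^u(e^u-1)+4\pi\sum_j\delta_{p_j}$, the map $\Phi(u)=(L+K)^{-1}(Ku-N_\lambda(u))$ is order-preserving; iterating $\Phi$ downward from a super-solution ($\Delta\bar u\le N_\lambda(\bar u)$) or upward from a sub-solution ($\Delta\underline u\ge N_\lambda(\underline u)$) converges to a solution whenever $\underline u\le\bar u$.

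Finally I would produce solutions at large $\lambda$ and propagate them upward. A large constant is a super-solution because the quadratic $\lambda h^2e^{2C_+}-\lambda h e^{C_+}+b$ is positive for $C_+$ large; for a constant sub-solution $\underline w=C_-$ one needs $\lambda h(x)^2e^{2C_-}-\lambda h(x)e^{C_-}+b\le0$ at every $x$, which holds for $e^{C_-}$ in the interval between the roots $t_\pm(x)=\frac{1\pm\sqrt{1-4b/\lambda}}{2h(x)}$; these intervals share a common point once $\lambda$ is large, giving existence for all large $\lambda$ and $\lambda_c<\infty$. For monotonicity, if $u_1$ solves \eqref{equ:4} at $\lambda_1$ and $\lambda_2>\lambda_1$, then $\Delta u_1-N_{\lambda_2}(u_1)=(\lambda_1-\lambda_2)e^{u_1}(e^{u_1}-1)\ge0$ by the pointwise sign above, so $u_1$ is a sub-solution at $\lambda_2$; pairing it with a large constant super-solution and iterating solves \eqref{equ:4} at $\lambda_2$. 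Hence the solvable set is a ray, and taking $\lambda_c$ to be its infimum gives the theorem. The main obstacle I expect is the large-$\lambda$ existence step---specifically verifying that the constant sub-solution exists, i.e. that the vertex-wise intervals $(t_-(x),t_+(x))$ overlap, and confirming that the monotone scheme stays between $\underline u$ and $\bar u$ and converges.
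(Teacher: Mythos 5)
Your proposal is correct and takes essentially the same route as the paper: the same Poisson-equation substitution $u=u_0+w$, the same integration argument for the necessary bound $\lambda\ge\frac{16\pi M}{|V|}$ (Cauchy--Schwarz in your version, completing the square in the paper's), the same pointwise sign $u\le 0$ via the maximum principle, the same monotone iteration with $(\Delta-K)$ (your order-preserving $(L+K)^{-1}$ is exactly the paper's Lemma \ref{lemma4.1}), a constant (in the transformed variable) sub-solution for large $\lambda$, and the identical argument that the solvable set is a ray whose infimum is $\lambda_c$. The only differences are cosmetic bookkeeping choices, so no further comparison is needed.
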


\section{The proof of Theorem \ref{2.1}}
\begin{proof}
For $u \in W^{1,2}(V) $, we consider the functional $$J(u)=\frac{1}{2}\int_{V}{|\bigtriangledown u|^2} + \int_{V} \rho \cdot \delta_0\cdot u.$$Let the set $$B=\{ u \in W^{1,2}(V) :\int_{V}{e^u}=\int_{V}{\rho\cdot \delta_0 } =\rho\} .$$
First we verify $B \neq \emptyset$: let $l>0$, define
\[u_l(x)=
\left\{
\begin{array}{ll}
e^l, & \hbox{$x=x_0$} \\
0, & \hbox{otherwise}
\end{array}
\right.
\]

and

\[\widetilde{u_l}(x)=
\left\{
\begin{array}{ll}
e^l, & \hbox{$x=x_0$} \\
0, & \hbox{otherwise}
\end{array}
\right.
\]

then $$\int_{V}{e^{u_l}=e^l \rightarrow +\infty \quad (l \rightarrow +\infty)},$$
and $$\int_{V}{e^{\widetilde{u_l}}}=e^{-l} \rightarrow 0 \quad(l \rightarrow +\infty) ,$$
Let $\Phi (t)=\int_{V}{e^{tu_l+(1-t)\widetilde{u_l}}} ,$ then for sufficiently big $l$, $$\Phi (0)<\rho<\Phi (1),$$

so there exists $t \in (0,1)$ such that $\Phi (t)=\rho$, therefore $B \neq \emptyset.$

For any $u \in B,$ $\int_{V}{e^u}=\rho$, choose $x_D \in V$ such that
$$e^{u(x_D)}=\min_{x\in V}{\{e^{u(x)}\}},$$ then
 $$Ne^{u(x_D)} \leq \rho,$$
 $$u(x_D) \leq \log \frac{\rho}{N}.$$
 Choose a shortest path on $G$ from $x_0$ to $x_D$ (therefore non-backtracking): $x_0 \sim x_1 \sim ...x_{D-1} \sim x_D,$ fix any $0<\epsilon<1$,
\[
\begin{split}
\frac{1}{2} \int_{V}{|\bigtriangledown u|^2}&=\sum_{x\in V}{\frac{1}{2 \mu (x)}} \sum_{y \sim x}{\omega_{xy}(u(y)-u(x))^2}\\
&\geq D_{eg} [(u(x_1)-u(x_0))^2+(u(x_2)-u(x_1))^2+...+(u(x_D)-u(x_{D-1}))^2]\\
&\geq D_{eg} \frac{(u(x_0)-u(x_D))^2}{D}\\
&\geq \frac{D_{eg}}{D}\cdot (u(x_0) -\log \frac{\rho}{N})^2\\
\end{split}.
\]
where $D_{eg}= \min_{x\in V,y\sim x}\frac{\omega_{xy}}{2\mu(x)}$,

So there exists $c>0$ depending on $\epsilon$, such that when $u(x_0) \geq c$, $\frac{1}{2} \int_{V}{|\bigtriangledown u|^2}\geq M_1\cdot |u(x_0)|$ for some $M_1 \geq \frac{\rho_2}{\epsilon}$. Therefore we have in this case
\begin{equation}\label{3.1}
	J(u) \geq (1-\epsilon)\cdot \frac{1}{2} \int_{V}{\vert \bigtriangledown u \vert^2}.
\end{equation}When $|u(x_0)| <c,$
 \begin{equation}\label{3.2}
 	J(u) > \frac{1}{2}\int_{V}{\vert \bigtriangledown u \vert^2-\rho c}.
 \end{equation}

 Therefore $J(u)$ has a lower bound on $B$. So we can choose
 $$u_k(x) \in B,J(u_k(x))\rightarrow b \quad (k\rightarrow \infty),$$ where $b=\inf_{u \in B}{J(u)}.$

 From \eqref{3.1} and \eqref{3.2}, for all $k$,
 $$\int_{V}{|\bigtriangledown u_k |^2}\leq c_1$$ for some constant $c_1$,
 since $|J(u_k) |\leq c_2$ for some constant $c_2$.
 As $$J(u_k)=\frac{1}{2} \int_{V}{\vert \bigtriangledown u_k \vert^2+\rho\cdot u_k(x_0)},$$ \\ there exists a constant $c'$, such that $|u_k(x_0)| \leq c'$ for all $k$ . For any $x \in V$, choose a shortest path on G from $x_0$ to $x$: $$x_0 \sim x_1 \sim ...x_{D'-1} \sim x,$$
\[
\begin{split}
|u_k(x) | &\leq |u_k(x)-u_k(x_{D'-1})|+|u_k(x_{D'-1})-u_k(x_{D'-2})| +...+| u_k(x_1)-u_k(x_0) | +|u_k(x_0) |\\
&\leq D\cdot [| u_k(x)-u_k(x_{D'-1}) |^2 +...+| u_k(x_1)-u_k(x_0) |^2 ]^{1/2}+c'\\
&\leq \frac{D'}{D_{eg}} \int_{V}| \bigtriangledown u_k |^2+c'\\
\end{split}.
\]

As $D'\leq N$, the $L^{\infty}$ norm of $u_k(x)$ is uniformly bounded, and therefore $u_k(x)$ are uniformly bounded in $W^{1,2}(V)$. From the Sobolev embedding (Lemma \ref{lemma2.1}),
there exits a subsequence $u_{k1}(x) \rightarrow u_{\infty}(x)\in W^{1,2}(V)$ in $W^{1,2}(V)$, and
$$\int_{V}{e^{u_{\infty}}}=\lim\limits_{k_1 \rightarrow \infty} \int_{V}{e^{u_{k1}}}=\rho.$$
Finally we prove that $u_{\infty}$ is the solution of equation \eqref{MFE}. This is based on the method of Lagrange maltiplies. Let
$$L(t,\lambda)=\frac{1}{2} \int_{V} \vert \bigtriangledown(u_{\infty}+t\varphi) \vert^2+\int_{V}{\rho\cdot \delta_0(u_{\infty}+t\varphi)}+\lambda (-\int_{V}{e^{u_{\infty}+t\varphi}}+\rho),$$
where $\varphi \in W^{1,2}(V).$  So we have
$$\frac{\partial L}{\partial \lambda}\mid_{t=0}=-\int_{V}{e^{u_{\infty}}+\rho}=0,$$
since $u_{\infty} \in B$. And
$$0=\frac{\partial L}{\partial t}|_{t=0}=-\int{\Delta u_{\infty} \cdot \varphi}+\int{\rho \cdot \delta_0 \cdot \varphi}-\lambda \int{e^{u_{\infty}} \cdot \varphi}=0.$$
Therefore by the variational principle, $$-\Delta u_{\infty}+\rho \cdot \delta_0-\lambda \cdot e^{u_{\infty}}=0.$$
 Since $\int_{V}{\Delta u_{\infty}}=0$, we have
 $$\lambda \int_{V}{e^{u_{\infty}}}=\int_{V}{\rho\cdot \delta_0}=\rho. $$ \\ So $\lambda =1$, and $$\Delta u_{\infty}+e^{u_{\infty}}=\rho \cdot \delta_0.$$
This finishes the proof of Theorem \ref{2.1}.
\end{proof}
\section{The proof of Theorem \ref{2.2}}

We use the method of upper and lower solutions to prove Theorem \ref{2.2}, adapting methods from \cite{KW74}, \cite{CY95} and \cite{GLY16} to the graph setting.\\
\begin{lemma}\label{lemma4.1}
(Maximum principle) Let $G=(V,E)$, where $V$ is a finite set, and $K\ge 0$ is a constant. Suppose a real function $u(x): V\to \mathbb{R}$ satisfies $$(\Delta -K)u(x)\geq 0 \text{ for all } x\in V,$$ \\ then $u(x)\leq 0$ for all $x \in V$.
\end{lemma}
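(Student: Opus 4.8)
The plan is to prove the maximum principle by examining the behavior of $u$ at a vertex where it attains its maximum value. Since $V$ is a finite set, $u$ necessarily attains a maximum somewhere, so let $x_0 \in V$ be a vertex with $u(x_0) = \max_{x \in V} u(x)$. The strategy is to evaluate the hypothesis $(\Delta - K)u(x_0) \geq 0$ at this maximizing vertex and extract a sign contradiction unless $u(x_0) \leq 0$.

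First I would write out the Laplacian explicitly at $x_0$:
$$\Delta u(x_0) = \frac{1}{\mu(x_0)} \sum_{y \sim x_0} w_{x_0 y}\bigl(u(y) - u(x_0)\bigr).$$
Because $u(x_0)$ is the maximum, every term $u(y) - u(x_0) \leq 0$, and since the weights $w_{x_0 y}$ and the measure $\mu(x_0)$ are positive, we get $\Delta u(x_0) \leq 0$. Combining this with the hypothesis $(\Delta - K)u(x_0) \geq 0$, that is $\Delta u(x_0) \geq K u(x_0)$, yields $K u(x_0) \leq \Delta u(x_0) \leq 0$, hence $K u(x_0) \leq 0$.

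Next I would split into cases on $K$. If $K > 0$, then $K u(x_0) \leq 0$ forces $u(x_0) \leq 0$ directly, and since $x_0$ is the maximizing vertex, $u(x) \leq u(x_0) \leq 0$ for all $x \in V$, which is the conclusion. The delicate case is $K = 0$: here $K u(x_0) \leq 0$ gives no information about the sign of $u(x_0)$, so the argument must be handled separately. For $K = 0$ the hypothesis reads $\Delta u(x) \geq 0$ for all $x$, and summing over $V$ against the measure gives $\int_V \Delta u \, d\mu = 0$ by the antisymmetry of the summand under swapping $x$ and $y$; since each $\Delta u(x) \geq 0$, this forces $\Delta u(x) = 0$ at every vertex. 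Then $u$ is harmonic, and on a connected finite graph a harmonic function is constant, so $u \equiv u(x_0)$. This constant case, however, does not by itself give $u \leq 0$ without an additional normalization, so I expect the main subtlety to lie in how the statement is intended for $K = 0$.

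The main obstacle I anticipate is precisely this $K = 0$ borderline: as stated the conclusion $u \leq 0$ cannot hold for every $u$ satisfying $\Delta u \geq 0$ when $K = 0$, since any positive constant function satisfies $\Delta u = 0$ yet is strictly positive. I would therefore expect the intended reading to require $K > 0$ strictly in the applications, or to rely on connectedness together with some boundary-type or normalization condition implicit in the usage within Theorem \ref{2.2}. In writing the proof I would emphasize the clean $K > 0$ argument above, which is the version actually needed, and note that the $K = 0$ case degenerates to the constancy of harmonic functions on connected finite graphs.
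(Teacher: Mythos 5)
Your proof is correct where the lemma is provable, and it is in fact more direct than the paper's. The paper also starts at a maximizing vertex $x_0$, but argues by contradiction: assuming $u(x_0)>0$, it uses $(\Delta-K)u(x_0)\geq 0$ and $K\geq 0$ to force $u(y)=u(x_0)$ for every neighbor $y$, propagates this by connectedness and induction to conclude that $u$ is constant, and only then integrates the hypothesis, getting $K\int_V u \leq \int_V \Delta u = 0$, to conclude $u(x_0)\leq 0$. Your route skips the constancy/connectedness step entirely: at the maximum each term of $\Delta u(x_0)=\frac{1}{\mu(x_0)}\sum_{y\sim x_0}w_{x_0y}\bigl(u(y)-u(x_0)\bigr)$ is nonpositive, so $Ku(x_0)\leq \Delta u(x_0)\leq 0$ and $K>0$ finishes immediately. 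A side benefit is that your argument never uses connectedness, which actually matches the lemma's stated hypothesis (``$V$ is a finite set'') better than the paper's own proof does.

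Your diagnosis of the borderline case is also correct and worth making explicit: the lemma as stated is false for $K=0$, since any positive constant satisfies $\Delta u = 0 \geq 0$ but not $u\leq 0$. The paper's proof contains precisely this gap --- its final step, deducing $u(x_0)\leq 0$ from $K\int_V u \leq \int_V \Delta u = 0$ and $K\geq 0$, is a non sequitur when $K=0$, because the inequality degenerates to $0\leq 0$. Fortunately this causes no damage downstream: every invocation of the lemma in the proof of Lemma \ref{lemma4.2} uses $K\geq 2\lambda>0$, which is exactly the case your clean argument covers, so your reading that the effective hypothesis is $K>0$ is the right one.
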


\begin{proof}Let $u(x_0)=\max_{x\in V}{\{u(x)\}}$, we only need to show that $u(x_0)\leq 0$. Suppose this is not the case. Since $$(\Delta -K)u(x_0)\geq 0,$$ \\ we have $$\sum_{y \sim x}{u(y)}\geq (d_{x_0}+K)u(x_0)\geq d_{x_0}u(x_0),$$ \\ where we have used the assumption that $u(x_0)> 0$, and that $K\ge 0$ in the last inequality. This implies that for any $y\sim x,u(y)\geq u(x_0)$. Since $G$ is a connected graph, by induction, for any $xy\in E,u(y)=u(x_0)$. From $$K \int_{V}u(x)\leq \int{\Delta u(x)}=0$$ and $K\ge 0$ we get that $u(x_0)\leq 0$. This is a contradiction.
\end{proof}

Let $u_0$ be a solution of the Poisson equation
\begin{equation}\label{4.1}
	\Delta u_0=-\frac{4\pi M}{|V|}+4\pi \sum_{j=1}^{M}\delta_{p_j}.
\end{equation}
The solution of \eqref{4.1} always exists, as the integral of the right side is equal to 0. Inserting $u=u_0+v$ into equation \eqref{equ:4}, we get
\begin{equation}\label{4.2}
	\Delta v=\lambda e^{u_0+v}(e^{u_0+v}-1)+\frac{4\pi M}{|V|}.
\end{equation}

Sum the two sides of the about equation, we get

$$\lambda (e^{u_0+v}-\frac{1}{2})^2=\frac{\lambda}{4}-\frac{4\pi M}{|V|},$$

which implies that
\begin{equation}\label{4.3}
\lambda\ge \frac{16\pi M}{|V|}.
\end{equation}

We call a function $v_+$ an upper solution of \eqref{4.2} if for any $x\in V$, it satisfies
\begin{equation}\label{4.4}
	\Delta v_+(x)\geq \lambda e^{u_0(x)+v_+(x)}(e^{u_0(x)+v_+(x)}-1)+\frac{4\pi M}{|V|}.
\end{equation}
Let $v_0=-u_0$, we define a sequence $\{v_n\}$ by iterating for a constant $K\geq 2\lambda$,
\begin{equation}\label{4.5}
	(\Delta -K)v_n=\lambda e^{u_0+v_{n-1}}(e^{u_0+v_{n-1}}-1)-Kv_{n-1}+\frac{4\pi M}{\vert V\vert}.
\end{equation}

We next prove that $\{v_n\}$ is a monotone sequence and it converges to a solution of equation \eqref{4.2}.


\begin{lemma}\label{lemma4.2}
	Let $\{v_n\}$  be a sequence defined by $\eqref{4.5}$. Then $$v_0\geq v_1 \geq v_2\geq... \geq v_n...\geq v_+ $$ for any upper solution $v_+$ of $\eqref{4.2}$ .
\end{lemma}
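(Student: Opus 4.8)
The plan is to treat \eqref{4.5} as a monotone iteration and to run two parallel inductions, the engine of both being the maximum principle (Lemma \ref{lemma4.1}). Write $F(v):=\lambda e^{u_0+v}(e^{u_0+v}-1)+\frac{4\pi M}{|V|}$, so that \eqref{4.2} reads $\Delta v=F(v)$, an upper solution satisfies $\Delta v_+\ge F(v_+)$, and the iteration \eqref{4.5} becomes $(\Delta-K)v_n=F(v_{n-1})-Kv_{n-1}$. Set $H(v):=F(v)-Kv$. The one computation that makes everything work is that, at each vertex, $H$ is a strictly decreasing function of the real variable $v$ on the range $u_0+v\le 0$: differentiating gives $\frac{d}{dv}\big(\lambda e^{u_0+v}(e^{u_0+v}-1)\big)=\lambda e^{u_0+v}(2e^{u_0+v}-1)$, and since $s(2s-1)\le 1$ for $s=e^{u_0+v}\in(0,1]$, this derivative is at most $\lambda$, so $H'(v)\le \lambda-K\le -\lambda<0$ once $K\ge 2\lambda$. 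Thus, as long as all iterates and the upper solution remain in the region $\{u_0+v\le 0\}$, comparing values of $H$ reduces to comparing the arguments.

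First I would prove the decreasing chain $v_0\ge v_1\ge\cdots$ by induction. For the base case, since $v_0=-u_0$ we have $e^{u_0+v_0}=1$, hence $F(v_0)=\frac{4\pi M}{|V|}$ and
\[(\Delta-K)(v_1-v_0)=F(v_0)-\Delta v_0=\frac{4\pi M}{|V|}-\Big(\frac{4\pi M}{|V|}-4\pi\sum_{j=1}^M\delta_{p_j}\Big)=4\pi\sum_{j=1}^M\delta_{p_j}\ge 0,\]
so Lemma \ref{lemma4.1} gives $v_1\le v_0$. For the inductive step, assuming $v_{n-1}\le v_{n-2}$ (both $\le v_0$ by the chain built so far, hence in the good region), subtracting the two instances of \eqref{4.5} for the indices $n$ and $n-1$ gives $(\Delta-K)(v_n-v_{n-1})=H(v_{n-1})-H(v_{n-2})\ge 0$ because $H$ is decreasing and $v_{n-1}\le v_{n-2}$; Lemma \ref{lemma4.1} then yields $v_n\le v_{n-1}$.

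Next I would show $v_n\ge v_+$ for every $n$ and every upper solution $v_+$. The base case is the claim $v_+\le v_0=-u_0$, i.e. $e^{u_0+v_+}\le 1$. Here I would argue pointwise rather than through Lemma \ref{lemma4.1}: letting $x_0$ maximize $\eta:=v_++u_0$, the inequality $\Delta \eta(x_0)\le 0$ holds at a maximizing vertex, while the upper-solution inequality \eqref{4.4} together with $\Delta u_0=-\frac{4\pi M}{|V|}+4\pi\sum_{j}\delta_{p_j}$ gives $\Delta \eta(x_0)\ge \lambda e^{\eta(x_0)}(e^{\eta(x_0)}-1)+4\pi\sum_{j}\delta_{p_j}(x_0)$; combining these forces $\lambda e^{\eta(x_0)}(e^{\eta(x_0)}-1)\le 0$, hence $\eta(x_0)\le 0$ and so $\eta\le 0$ everywhere. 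This also places $v_+$ in the good region. For the inductive step, assume $v_+\le v_{n-1}$; since $\Delta v_+\ge F(v_+)$ we have $(\Delta-K)v_+\ge H(v_+)$, so $(\Delta-K)(v_+-v_n)\ge H(v_+)-H(v_{n-1})\ge 0$ using that $H$ is decreasing and $v_+\le v_{n-1}$ (both $\le v_0$). Lemma \ref{lemma4.1} gives $v_+\le v_n$, completing the induction and assembling the full chain $v_0\ge v_1\ge\cdots\ge v_n\ge\cdots\ge v_+$.

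The main obstacle, and the point deserving the most care, is keeping every iterate inside the region $\{u_0+v\le 0\}$ where $H$ is monotone; outside this region $H'$ can be positive and the comparison breaks. This is exactly why the a priori bounds $v_n\le v_0$ (from the decreasing chain) and $v_+\le v_0$ (the pointwise maximum argument) must be established first and threaded through both inductions, and why the constant is chosen as $K\ge 2\lambda$. A secondary point is that the lower-bound base case cannot be obtained by a direct application of Lemma \ref{lemma4.1}, since $v_+$ solves only a nonlinear differential inequality whose linearization carries no uniform constant bound; evaluation at the maximizing vertex circumvents this.
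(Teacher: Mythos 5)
Your proof is correct and follows essentially the same strategy as the paper: a monotone iteration with two inductions driven by the maximum principle (Lemma \ref{lemma4.1}), together with a pointwise evaluation at a maximizing vertex for the base case $v_+\le v_0$. The only difference is cosmetic: you package the key comparison as monotonicity of $H(v)=F(v)-Kv$ on the region $\{u_0+v\le 0\}$ via a derivative bound, whereas the paper drops the first-order term and applies the mean value theorem to the quadratic term; both exploit $K\ge 2\lambda$ and the a priori bound by $v_0$ in exactly the same way.
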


\begin{proof}
	We prove the Lemma by induction. As $v_0=-u_0$, for $v_1$ we have by $\eqref{4.5}$,
	\begin{equation*}
		(\Delta -K)v_1=Ku_0+\frac{4\pi M}{|V|}.
	\end{equation*}
	Together with $\eqref{4.1}$, we obtain $$(\Delta -K)(v_1-v_0)(x)=4\pi \sum_{j=1}^{M}{\delta_{p_j}(x)}\geq 0$$for any $x\in V$, and $$K\int_{V}{(v_1-v_0)}=-4\pi M<0.$$
	Therefore $v_1-v_0\leq 0$ by Lemma \ref{lemma4.1}. Suppose that $v_0 \geq v_1\geq ...\geq v_k$ for $k\ge 1$. From $\eqref{4.5}$
	and $K\geq 2\lambda$, we get
	\[
	\begin{split}
	(\Delta -K)(v_{k+1}-v_k) &= \lambda e^{2u_0 +2v_k}-\lambda e^{u_0+v_k}-Kv_k-\lambda e^{2u_0+2v_{k-1}}+\lambda e^{u_0+v_{k-1}}+Kv_k\\
	&= \lambda e^{2u_0}(e^{2v_k}-e^{2v_{k-1}})-\lambda e^{u_0}(e^{v_k}-e^{v_{k-1}})-K(v_k-v_{k-1})\\
	&\geq \lambda e^{2u_0}(e^{2v_k}-e^{2v_{k-1}})-K(v_k-v_{k-1})\\
	&=2\lambda e^{2u_0+2v^*}(v_k-v_{k-1})-K(v_k-v_{k-1})\\
	&\geq K(e^{2u_0+2v_0}-1)(v_k-v_{k-1})\\
	&\geq 0.
	\end{split}
	\]
	Where $v_k\leq v^*\leq v_{k-1}\leq v_0.$ Lemma 4.1 then implies that $v_{k+1}-v_k\leq 0$ on $V$.
	
	Next we prove that $v_k\geq v_+$ for any $k$.\\
	First consider the case $k=0$. From $\eqref{4.1}$ and $\eqref{4.4}$ ,

	\begin{equation}\label{d}
		\begin{split}
		\Delta (v_+-v_0) &\geq \lambda e^{u_0 +v_+}(e^{u_0+v_+}-1)+4\pi \sum_{j=1}^{M}{\delta_{p_j}}\\
		&\geq \lambda e^{u_0 +v_+}(e^{u_0+v_+}-1)\\
		&= \lambda e^{v_+-v_0}(e^{v_+-v_0}-1).
		\end{split}
	\end{equation}
Let $v_+(x_0)-v_0(x_0)=\max_{x\in V}{\{v_+(x_0)-v_0(x_0)\}}$. We only need to prove that $v_+(x_0)-v_0(x_0)\leq 0$. Suppose not, then from \eqref{d} we have $$\Delta (v_+-v_0)(x_0)>0.$$
which contradicts with the assumption that $x_0$ is a point where $v_+-v_0$ attains maximum in $V$. Hence $v_+-v_0\leq 0$ in $V$.	
Now suppose that $v_+\leq u_k$ for $k\ge 0$. From \eqref{4.4} and \eqref{4.5}, we have
	\[
	\begin{split}
	    (\Delta -K) (v_+-v_{k+1}) &= \lambda e^{2u_0}(e^{2v_+}-e^{2v_k})-K(v_+-v_k)-\lambda e^{u_0}(e^{v_+}-e^{v_k})\\
      	&\geq \lambda e^{2u_0}(e^{2v_+}-e^{2v_k})-K(v_+-v_k)\\
	    &= 2\lambda e^{2u_0+2v^*}(v_+-v_k)-K(v_+-v_k)\\
	    &\geq K(e^{2u_0+2v_0}-1)(v_+-v_k)\\
  	    &=0,
	\end{split}
	\]
where $v_+ \leq v^* \leq v_k \leq v_0$. So Lemma \ref{lemma4.1} implies that $v_{k+1}\geq v_+$.
	
This finishes the proof of Lemma \ref{lemma4.2}.

\end{proof}

\begin{lemma}
The equation \eqref{equ:4} has a solution on $G$, when $\lambda$ is sufficiently big.
\end{lemma}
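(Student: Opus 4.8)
The plan is to combine the monotone iteration scheme \eqref{4.5} with the comparison result of Lemma \ref{lemma4.2}: once we know the iterates $\{v_n\}$ are trapped between $v_0$ and a fixed upper solution, they must converge, and the limit will solve \eqref{4.2}. The one ingredient still missing is the \emph{existence} of an upper solution of \eqref{4.2} when $\lambda$ is large, so I would begin there.

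First I would construct an upper solution $v_+$ of \eqref{4.2} for $\lambda$ large. The natural attempt is a constant, $v_+\equiv c$, since then $\Delta v_+=0$ and the defining inequality \eqref{4.4} reduces to the pointwise condition
$$\lambda\, e^{u_0(x)+c}\bigl(e^{u_0(x)+c}-1\bigr)\le -\frac{4\pi M}{|V|}\qquad\text{for all }x\in V.$$
Choosing any constant $c<-\max_{x\in V}u_0(x)$ forces $0<e^{u_0(x)+c}<1$ at every vertex, so each quantity $e^{u_0(x)+c}\bigl(e^{u_0(x)+c}-1\bigr)$ is strictly negative. Because $V$ is finite, $-\delta:=\max_{x\in V}e^{u_0(x)+c}\bigl(e^{u_0(x)+c}-1\bigr)<0$, and hence for every $\lambda\ge \frac{4\pi M}{|V|\,\delta}$ the displayed inequality holds at all vertices. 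Thus the constant $v_+\equiv c$ is a genuine upper solution once $\lambda$ is sufficiently large.

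With such a $v_+$ in hand, Lemma \ref{lemma4.2} gives $v_0\ge v_1\ge\cdots\ge v_n\ge\cdots\ge v_+$, so at each vertex $x$ the sequence $\{v_n(x)\}$ is monotone decreasing and bounded below by $v_+(x)$. Since $V$ is finite, this yields a pointwise (equivalently, $W^{1,2}(V)$) limit $v_\infty=\lim_{n\to\infty}v_n$. I would then pass to the limit in the iteration \eqref{4.5}: every operation involved is continuous on the finite-dimensional space $W^{1,2}(V)$ (the map $\Delta-K$ is linear and $t\mapsto e^t$ is continuous), so
$$(\Delta-K)v_\infty=\lambda e^{u_0+v_\infty}\bigl(e^{u_0+v_\infty}-1\bigr)-Kv_\infty+\frac{4\pi M}{|V|}.$$
Cancelling the $\pm Kv_\infty$ terms shows that $v_\infty$ solves \eqref{4.2}, and therefore $u=u_0+v_\infty$ solves \eqref{equ:4}, proving the lemma.

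The genuinely substantive point is the construction of the upper solution, together with the recognition that large $\lambda$ is exactly what makes the strictly negative reaction term $e^{u_0+c}\bigl(e^{u_0+c}-1\bigr)$ dominate the constant source $\frac{4\pi M}{|V|}$; everything afterwards is forced. The remaining steps---monotone convergence and passage to the limit---are routine precisely because the graph is finite, so neither Sobolev compactness (Lemma \ref{lemma2.1}) nor elliptic regularity is required, in contrast to the continuous setting of \cite{CY95}.
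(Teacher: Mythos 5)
Your proposal is correct and follows essentially the same route as the paper: both construct a negative constant upper solution $v_+$ with $u_0+v_+<0$, use finiteness of $V$ to make the reaction term uniformly negative, and take $\lambda$ large enough to absorb the source term $\frac{4\pi M}{|V|}$. The only difference is that you spell out the monotone-convergence and passage-to-the-limit step explicitly, which the paper treats as implicit in the iteration scheme surrounding Lemma \ref{lemma4.2}; this is a welcome addition but not a different argument.
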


\begin{proof}
	We only need to prove that equation \eqref{4.2} has an upper solution $v_+$. Suppose $u_0$ is a solution of \eqref{4.1}. Choose $v_+ = -c'' <0$ to be a constant function, where $-c''$ is sufficiently small such that $u_0 +v_+ <0$ in $V$. Then $e^{u_0+v_+}-1 <0$. So we can choose $\lambda >0$ big enough such that $$\lambda e^{u_0+v_+}(e^{u_0+v_+}-1)+\frac{4\pi M}{|V|}<0. $$
	Therefore $$0=\Delta v{_+}>\lambda e^{u_0+v_+}(e^{u_0+v_+}-1)+\frac{4\pi M}{|V|}.$$
	So $v_+\equiv -c$ is an upper solution of \eqref{4.2}.
\end{proof}

\begin{lemma}
If $u$ is a solution of equation \eqref{equ:4}  on $G$, then $u<0$ on $G$.
\end{lemma}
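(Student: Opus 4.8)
The plan is to run a maximum-principle argument at a vertex where $u$ attains its maximum. Let $x_0\in V$ satisfy $u(x_0)=\max_{x\in V}u(x)$. Since $u(y)\le u(x_0)$ for every neighbor $y\sim x_0$ and all weights $w_{x_0y}$ are positive, the definition of $\Delta$ gives $\Delta u(x_0)\le 0$. Evaluating \eqref{equ:4} at $x_0$ and using that each $\delta_{p_j}(x_0)\ge 0$, I would obtain
$$\lambda e^{u(x_0)}\bigl(e^{u(x_0)}-1\bigr)=\Delta u(x_0)-4\pi\sum_{j=1}^{M}\delta_{p_j}(x_0)\le 0.$$
Because $\lambda>0$ and $e^{u(x_0)}>0$, this forces $e^{u(x_0)}\le 1$, i.e.\ $u(x_0)\le 0$, and hence $u\le 0$ on all of $V$.

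The remaining and more delicate task is to upgrade this to the strict inequality $u<0$; this is a strong-maximum-principle phenomenon and I expect it to be the main obstacle. I would argue by contradiction, assuming the maximum value is exactly $0$, and split into two cases according to whether the maximizer lies on the support of the Dirac sources. If $x_0\in\{p_1,\dots,p_M\}$, then $4\pi\sum_{j}\delta_{p_j}(x_0)>0$ while $e^{u(x_0)}\bigl(e^{u(x_0)}-1\bigr)=0$, so the equation yields $\Delta u(x_0)>0$, contradicting $\Delta u(x_0)\le 0$ at a maximizer.

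If instead $x_0\notin\{p_1,\dots,p_M\}$, the source term vanishes at $x_0$ and the equation gives $\Delta u(x_0)=\lambda e^{0}(e^{0}-1)=0$; since $\sum_{y\sim x_0}w_{x_0y}\bigl(u(y)-u(x_0)\bigr)=0$ with each summand $\le 0$ and each weight positive, every neighbor $y$ must also satisfy $u(y)=0$. I would then propagate this observation: setting $S=\{x\in V:u(x)=0\}$, the argument shows that any $x\in S$ with $x\notin\{p_j\}$ has all its neighbors in $S$. If $S$ meets some $p_j$ we are back in the first case and done; otherwise every element of $S$ is a non-source vertex, so $S$ is closed under adjacency, and connectedness of $G$ forces $S=V$, i.e.\ $u\equiv 0$. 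But then the equation at $p_1$ reads $0=\Delta u(p_1)=4\pi\delta_{p_1}(p_1)>0$, a contradiction. In every case the assumed maximum value $0$ is impossible, so $\max_{V}u<0$, and therefore $u<0$ everywhere on $G$.
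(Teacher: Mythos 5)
Your proof is correct and follows essentially the same route as the paper's: a discrete maximum-principle argument at a maximizing vertex, propagation of the maximum value along the connected graph, and a final contradiction with the positive Dirac sources at the $p_j$. The only difference is organizational — you first establish $u\le 0$ and then rule out a zero maximum by cases, whereas the paper assumes $u(x_0)\ge 0$ and derives constancy of $u$ in one pass — but the underlying mechanism is identical.
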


\begin{proof}
Let $u(x_0)=\max_{x\in V}{\{u(x)\}}$, we only need to show that $u(x_0)<0$. Suppose $u(x_0)\ge 0$. Then $e^{u(x_0)}-1\ge 0$. From equation \eqref{equ:4} we get
that
$$\Delta u(x_0)\geq 0,$$

 that is
 $$\sum_{y \sim x}{u(y)}\geq d_{x_0}u(x_0).$$
  This implies that for any
  $$y\sim x,u(y)\geq u(x_0).$$
Since $G$ is a connected finite graph, by iterating the above process, we get that for any $$ y\in V,u(y)=u(x_0).$$

So the left side of equation \eqref{equ:4} is $0$ and the right side is positive on $p_j\in V$, which is a contradiction.
\end{proof}

Now we prove Theorem \ref{2.2}, which is similar to the proof of Lemma 4 in \cite{CY95}.

\begin{proof}
Denote $$\Lambda=\{\lambda>0| \lambda\,  \mbox{is such that equation \eqref{equ:4} has a solution} \}.$$
We will show that $\Lambda$ is an interval. Suppose that $\lambda'\in \Lambda$. We need to prove that
$$[\lambda', +\infty)\in \Lambda.$$
In fact, let $u'=u_{0}+v'$ is the solution of equation \eqref{equ:4} at $\lambda=\lambda'$, where $v'$ is the corresponding
solution of equation \eqref{4.2}. Since $$u'=u_{0}+v'<0,$$ we see that $v'$ is an upper solution of equation \eqref{4.2} for any  $\lambda\geq\lambda'$.
By Lemma \ref{lemma4.2}, we obtain that $\lambda\in \Lambda$ as desired.

Set $ \lambda_c=\inf \left\{ \lambda| \lambda\in\Lambda\right\}.$ Then $\lambda\ge\frac{16\pi M}{|V|}$ for any $\lambda> \lambda_c$ by \eqref{4.3} and that $\Lambda$ is an interval. Taking the limit, we get
that
$$\lambda_c\ge \frac{16\pi N}{|V|}.$$
\end{proof}

{\bf Acknowledgements.} Y. Lin is supported by the National Science Foundation of China (Grant No. 11271011), S.-T. Yau is supported by the NSF DMS-0804. Part of the work was done when Y. Lin visited the Harvard CMSA in 2018.

\bibliographystyle{amsalpha}

An Huang,\\
Department of Mathematics, Brandies University, Waltham, Massachusetts, USA\\
\textsf{anhuang@brandeis.edu}\\
Yong Lin,\\
Yau Mathematical Sciences Center, Tsinghua University, Beijing, China\\
\textsf{yonglin@tsinghua.edu.cn}\\
Shing-Tung Yau\\
Department of Mathematics, Harvard University, Cambridge, Massachusetts, USA\\
\textsf{yau@math.harvard.edu}\\

\end{document}